\theoremstyle{plain}
\newtheorem{theorem}[equation]{Theorem}
\newtheorem{lemma}[equation]{Lemma}
\newtheorem{corollary}[equation]{Corollary}
\theoremstyle{remark}
\newtheorem{remark}[equation]{Remark}
\numberwithin{equation}{section}
\newcommand{\dbar}{\bar \partial}
\def\norm#1{\Vert#1\Vert}
\newcommand{\ca}{{\mathcal A}}
\newcommand{\co}{{\mathcal O}}
\newcommand{\B}{{\mathbb B}}
\newcommand{\C}{{\mathbb C}}
\newcommand{\h}{{\mathbb H}}
\newcommand{\Z}{{\mathbb Z}}
\begin{document}

\title[Generalized Hartogs Triangles]{Bergman Theory of certain Generalized Hartogs Triangles}
\author{Luke D. Edholm}
\subjclass[2010]{32W05}
\begin{abstract}
The Bergman theory of domains $\{ |{z_{1} |^{\gamma}}  <  |{z_{2}} |  <  1 \}$ in $\C^2$ is studied for certain values of $\gamma$, including all positive integers.  For such $\gamma$, we obtain a closed form expression for the Bergman kernel, $\B_{\gamma}$.  With these formulas, we make new observations relating to the Lu Qi-Keng problem and analyze the boundary behavior of $\B_{\gamma}(z,z)$.
 \end{abstract}
\address{Department of Mathematics, \newline The Ohio State University, Columbus, Ohio, USA}
\email{edholm.1@osu.edu}

\maketitle 

\section{Introduction}

For a domain $\Omega \subset \C^n$, the Bergman space is the set of square-integrable, holomorphic functions on $\Omega$.  The Bergman kernel is a reproducing integral kernel on the Bergman space that is indispensable to the study holomorphic functions in several complex variables.  The purpose of this paper is to understand Bergman theory for a class of bounded, pseudoconvex domains in $\C^2$.  Define the {\em generalized Hartogs triangle of exponent} $\gamma > 0$ to be the domain
\begin{equation}\label{E:GenHartogsTriangle}
\h_{\gamma} = \{(z_1,z_2) \in \C^2: |{z_{1} |^{\gamma}}  <  |{z_{2}} |  <  1 \}.
\end{equation}
$\h_1$ is the ``classical'' Hartogs triangle, a well-known pseudoconvex domain with non-trivial Nebenh\"ulle.  When $\gamma > 1$, we call $\h_\gamma$ a {\em fat Hartogs triangle}, and when $0 < \gamma < 1$, we call $\h_\gamma$ a {\em thin Hartogs triangle}.  Our main results are the following two computations.

\begin{theorem}\label{T:fatHartogsformula}
 Let $s := z_{1}\bar{w}_{1}$, $t := z_{2}\bar{w}_{2}$, and $k \in \Z^+$.  The Bergman kernel for the fat Hartogs triangle $\h_{k}$ is given by
\begin{equation}\label{E:fatHartogsformula}
 \B_{k}(z,w) = \frac{p_{k}(s)t^{2}+q_{k}(s)t + s^{k}p_{k}(s)}{k\pi^{2}(1-t)^{2}(t -s^{k})^{2}}, 
\end{equation}
where $p_k$ and $q_k$ are the polynomials
\[
\\	p_{k}(s) = \sum_{l=1}^{k-1}l(k-l)s^{l-1},\qquad q_{k}(s) = \sum_{l=1}^{k}(l^2+(k-l)^2s^k)s^{l-1}.
\]
\end{theorem}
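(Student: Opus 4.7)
The approach I would take is the classical monomial method for Reinhardt domains. Since $\{z_1 = 0\} \cap \h_k \neq \emptyset$ while $\{z_2 = 0\} \cap \h_k = \emptyset$, the orthogonal monomial basis of $A^2(\h_k)$ consists of Laurent monomials $\phi_{a,b}(z) = z_1^a z_2^b$ with $a \geq 0$, $b \in \Z$, subject to an integrability constraint. A polar-coordinate integration over $\{0 \leq r_1 < r_2^{1/k},\, 0 < r_2 < 1\}$ shows $\phi_{a,b} \in L^2(\h_k)$ iff $a + bk + k + 1 > 0$, with
\[
\norm{\phi_{a,b}}^2 = \frac{\pi^2 k}{(a+1)(a+bk+k+1)}.
\]
Setting $s = z_1 \bar w_1$ and $t = z_2 \bar w_2$, this turns the Bergman kernel into the double sum
\[
\B_k(z,w) \;=\; \frac{1}{\pi^2 k}\sum_{(a,b)} (a+1)(a+bk+k+1)\, s^a t^b,
\]
which must be brought to closed form.

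The central idea is a reparametrization adapted to the factor of $k$. I would write $a = qk + r$ by Euclidean division, with $q \geq 0$ and $0 \leq r \leq k-1$, and set $c = b + q + 1$. The admissibility condition simplifies to $c \geq 0$; the coefficients and monomials transform cleanly as
\[
(a+1)(a+bk+k+1) = (qk+r+1)(ck+r+1), \qquad s^a t^b = s^r (s^k/t)^q t^{c-1}.
\]
Writing $u = s^k/t$, the triple sum decouples into
\[
\B_k(z,w) \;=\; \frac{1}{\pi^2 k\, t} \sum_{r=0}^{k-1} s^r \left(\sum_{q \geq 0} (qk + r + 1)\, u^q\right)\left(\sum_{c \geq 0} (ck + r + 1)\, t^c\right).
\]
Each inner sum is a derivative of a geometric series, evaluating to $[(k-r-1)x + (r+1)]/(1-x)^2$. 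The identity $(1-u)^2 = (t-s^k)^2/t^2$ then produces exactly the denominator $k\pi^2(1-t)^2(t-s^k)^2$ claimed in \eqref{E:fatHartogsformula}.

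What remains is algebra. I would expand the product $[(k-r-1)u + (r+1)][(k-r-1)t+(r+1)]$, multiply by $t$ to clear the $1/t$ hidden in $u$, and reindex with $l = r+1$. The four resulting monomials, summed over $l \in \{1,\dots,k\}$, collect cleanly into $t^2 p_k(s) + t\, q_k(s) + s^k p_k(s)$; the factors $l(k-l)$ vanish at $l = k$, which explains why the upper limit on $p_k$ is $k-1$ rather than $k$. The main obstacle is mild bookkeeping in this final collection; the conceptual content lies entirely in the Euclidean-division reparametrization $(a,b) \leftrightarrow (q,r,c)$, which converts a lattice sum over $\Z^2$ into three essentially independent geometric series.
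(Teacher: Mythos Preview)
Your argument is correct, and it takes a genuinely different route from the paper. The paper does not sum the monomial series directly; instead it invokes Bell's transformation rule for the proper map $\phi(z_1,z_2)=(z_1,z_2^k)$ from $\h_1$ onto $\h_k$, expressing $\B_k$ as a finite sum over $k$th roots of unity involving the known kernel $\B_1$. The bulk of the paper's work is then algebraic: showing that this root-of-unity sum collapses to a polynomial in $a^k$ (via a symmetry $G(\zeta^m a,s)=G(a,s)$) and identifying the three surviving coefficients through two combinatorial lemmas on products of truncated geometric sums $h_l(s)=\sum_{r=0}^l s^r$. Your Euclidean-division reparametrization $(a,b)\leftrightarrow(q,r,c)$ achieves the same collapse far more transparently: it turns the lattice sum into a product of three independent geometric series and delivers $p_k$, $q_k$ with almost no bookkeeping. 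The paper's approach, on the other hand, highlights Bell's rule as a structural tool and extends immediately to $\h_{m/n}$ via the proper map $(z_1,z_2)\mapsto(z_1 z_2^{n-1},z_2^m)$, whereas your direct summation would need a fresh (though analogous) reparametrization for each such exponent.
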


\begin{theorem}\label{T:thinHartogsformula}
  Let $s = z_{1}\bar{w}_{1}$, $t = z_{2}\bar{w}_{2}$, and $k \in \Z^+$.  The Bergman kernel for the thin Hartogs triangle $\h_{1/k}$ is given by
\begin{equation}\label{E:thinHartogsformula}
\B_{1/k}(z,w) = \frac{t^k}{\pi^2(1-t)^2(t^k-s)^2}.
\end{equation}
\end{theorem}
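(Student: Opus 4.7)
My plan is to exploit the two-torus symmetry of $\h_{1/k}$ to produce an explicit orthogonal monomial basis of the Bergman space $A^2(\h_{1/k})$, and then to sum the resulting kernel series in closed form.

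First I would note that $\h_{1/k}$ is a bounded Reinhardt domain, so by the standard Laurent expansion argument an orthogonal basis of $A^2(\h_{1/k})$ is given by the monomials $z_1^a z_2^b$, $(a,b)\in\Z^2$, which are both holomorphic on $\h_{1/k}$ and square-integrable on it. Since the slice $\{z_1=0,\ 0<|z_2|<1\}$ lies in $\h_{1/k}$ but no slice with $z_2=0$ does, holomorphy forces $a\geq 0$ while $b$ may be negative. Rewriting $\h_{1/k}=\{|z_1|<|z_2|^k,\ |z_2|<1\}$ and integrating in polar coordinates yields
\[
\|z_1^a z_2^b\|^2 = 4\pi^2\int_0^1 r_2^{2b+1}\int_0^{r_2^k} r_1^{2a+1}\,dr_1\,dr_2 = \frac{\pi^2}{(a+1)(b+1+k(a+1))},
\]
which is finite precisely when $a\geq 0$ and $b\geq -k(a+1)$.

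Next I would assemble the kernel series
\[
\B_{1/k}(z,w) = \frac{1}{\pi^2}\sum_{a=0}^\infty\sum_{b=-k(a+1)}^\infty (a+1)\bigl(b+1+k(a+1)\bigr)s^a t^b,
\]
with $s=z_1\bar w_1$ and $t=z_2\bar w_2$. Substituting $c=b+k(a+1)\geq 0$ decouples the two sums:
\[
\B_{1/k}(z,w) = \frac{1}{\pi^2}\Biggl(\sum_{c=0}^\infty (c+1)t^c\Biggr)\Biggl(\sum_{a=0}^\infty (a+1)\,s^a t^{-k(a+1)}\Biggr).
\]
Each factor is a differentiated geometric series: the first sums to $(1-t)^{-2}$, and the second to $t^{-k}(1-st^{-k})^{-2}=t^k(t^k-s)^{-2}$. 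Multiplying yields \eqref{E:thinHartogsformula}.

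The one technical point will be justifying that the square-integrable holomorphic monomials exhaust $A^2(\h_{1/k})$; this is the standard Reinhardt/Laurent argument, though mild care is warranted since $\h_{1/k}$ is not a complete Reinhardt domain and admits basis elements with negative $z_2$-exponent. The absolute convergence needed to rearrange the double sum into two single sums follows from the uniform bound $|s|=|z_1\bar w_1|<|z_2|^k|w_2|^k=|t|^k<1$ valid on $\h_{1/k}\times\h_{1/k}$, so no delicate convergence issue arises.
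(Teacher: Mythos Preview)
Your argument is correct, but it is genuinely different from the paper's.  The paper does not sum a series at all: it observes that $\psi^k(z)=(z_1z_2^{-k},z_2)$ is a biholomorphism of $\h_{1/k}$ onto $D\times D^*$, whose Bergman kernel equals that of the bidisc, and then applies the biholomorphic transformation rule \eqref{E:BiholoInvariance} with $\det(\psi^k)'(z)=z_2^{-k}$.  The formula drops out in one line.

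Your route---identifying the square-integrable Laurent monomials, computing their norms, and re-indexing via $c=b+k(a+1)$ so the double series factors into two differentiated geometric series---is entirely self-contained: it does not require spotting the biholomorphism or invoking any transformation law, and it makes explicit the structure of $A^2(\h_{1/k})$ (in particular, the presence of negative $z_2$-exponents).  The paper's approach, by contrast, is shorter and more conceptual, and it explains \emph{why} the formula looks like a twisted bidisc kernel: because $\h_{1/k}$ literally is a bidisc in disguise.  Your convergence justification $|s|<|t|^k<1$ is exactly right, and your caveat about the Reinhardt/Laurent basis on a non-complete Reinhardt domain is the only point needing care; it is standard and holds here since $\h_{1/k}$ is Reinhardt with $z_2\neq 0$ throughout.
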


There has been an extensive amount of research devoted to understanding Bergman kernels of various classes of domains, and there are several instances in which explicit formulas for the kernel have been obtained. The most common method involves summing an infinite series, which is done in \cite{DAngelo78}, \cite{DAngelo94}, \cite{Park08}.  In \cite{BFS99}, explicit formulas for the Bergman kernel are produced using other techniques which avoid infinite series altogether.  But these situations are exceptional, and in most cases it is impossible to express the Bergman kernel in closed form.

Despite the difficulty of producing explicit formulas, powerful estimates on the Bergman kernel have been given for many classes of pseudoconvex domains.  In \cite{Fefferman74}, Fefferman develops an asymptotic expansion of the kernel on smoothly bounded, strongly pseudoconvex domains in $\C^n$.  Useful estimates also exist for large classes of smoothly bounded, weakly pseudoconvex domains.  See \cite{Catlin89}, \cite{McNeal89}, \cite{NagRosSteWai89}, \cite{McNeal94},  for some of the principal results on {\em finite type} domains, and \cite{Fu14} for domains with locally smooth boundaries and constant Levi-rank.

At present, there are no general theorems about the behavior of the Bergman kernel on pseudoconvex domains near unsmooth boundary points, which adds to the intrigue of Theorems \ref{T:fatHartogsformula} and \ref{T:thinHartogsformula}.  Each generalized Hartogs triangle defined by (\ref{E:GenHartogsTriangle}) has two very different kinds of boundary irregularities: the `corner points' which occur at the intersection of the two bounding real hypersurfaces, and the origin singularity, nearby which $b\h_k$ cannot be expressed as the graph of a continuous function.

{\centering
\begin{tikzpicture}[x={(0,1cm)},y={(1 cm,0)}]

	\draw[-{latex}, thick] (0,0) -- (3.5,0) node[anchor=east] {$|z_2|$};
	\draw[-{latex}, thick] (0,0) -- (0,3.5) node[anchor=west] {$|z_1|$};
	\shadedraw [gray, domain=0:3] (0,0) -- plot  (1/3*\x*\x,\x) -- (3,0);

	\draw[-{latex}, thick, xshift=150] (0,0) -- (3.5,0) node[anchor=east] {$|z_2|$};
	\draw[-{latex}, thick, xshift=150] (0,0) -- (0,3.5) node[anchor=west] {$|z_1|$};
	\shadedraw [gray, domain=0:3, xshift=150] (0,0) -- plot  (\x,1/3*\x*\x) -- (3,0);
\end{tikzpicture} \\
}

This is one of several recent papers to study holomorphic function theory on domains with similar kinds boundary singularities.  In \cite{ChakShaw13}, Chakrabarti and Shaw investigate the Sobolev regularity of the $\dbar$-equation on the classical Hartogs triangle.  In \cite{ChaZey14}, Chakrabarti and Zeytuncu study the $L^p$-mapping properties of the Bergman projection on the classical Hartogs triangle, and in \cite{Chen14}, Chen studies $L^p$-mapping of the Bergman projection on analogous domains in higher dimensions.  Zapalowski, \cite{Zapa16}, characterizes proper maps between generalizations of the Hartogs triangle in $\C^n$.  This author and McNeal investigate the Bergman projection on fat Hartogs triangles in \cite{EdhMcN15}.  It can be hoped that by understanding the Bergman theory on example domains with boundary singularities such as $\h_{\gamma}$, we can gain deeper insight into the situation on more general domains.

\subsection{Acknowledgements} I would like to thank my advisor, Jeff McNeal, for introducing me to Bergman theory, and for his ongoing support and encouragement.  The many interesting discussions in his office have contributed to my growth, both as a mathematician and a human being.  I would also like to thank the anonymous referee who offered many useful suggestions to improve the original draft of this paper.

\bigskip
\bigskip


\section{Preliminaries}\label{S:prelims}

\subsection{Bergman theory}

Here we highlight some basic facts about Bergman theory that are used throughout this paper.  See \cite{Krantz_scv_book} for a more detailed treatment.  If $\Omega\subset\C^n$ is a domain, let $\co(\Omega)$ denote the holomorphic functions on $\Omega$. The standard $L^2$ inner product will be denoted

\begin{equation}\label{D:innerProduct}
\left\langle f,g\right\rangle =\int_\Omega f\cdot \bar g\, dV,
\end{equation}
where $dV$ denotes Lebesgue measure on $\C^n$. $L^2(\Omega)$ denotes the measurable functions $f$ such that $\langle f,f\rangle =\|f\|^2 <\infty$.  We define the {\em Bergman space} $A^2(\Omega):=\co(\Omega)\cap L^2(\Omega)$.

$A^2(\Omega)$ is a Hilbert space with inner product (\ref{D:innerProduct}), and for all $z \in \Omega$, the evaluation functional $\text{ev}_z: f \mapsto f(z)$ is continuous.  Therefore, the Riesz representation theorem guarantees the existence of a function $\B_{\Omega}:\Omega \times \Omega \to \C$ satisfying
\begin{equation}\label{E:ReproducingProp}
f(z) = \int_{\Omega} \B_{\Omega}(z,w)f(w) \,dV(w),\qquad f\in A^2(\Omega).
\end{equation}

We call $\B_{\Omega}$ the {\em Bergman kernel}, and when context is clear we may omit the subscript.  In addition to reproducing functions in the Bergman space via equation (\ref{E:ReproducingProp}), the Bergman kernel is conjugate symmetric and for each fixed $w \in \Omega$, $\B(\cdot \, ,w) \in A^2(\Omega)$.

Given an orthonormal Hilbert space basis \{$\phi_{\alpha}\}_{\alpha \in \ca}$ for $A^2(\Omega)$, the Bergman kernel is given by the following formula, which is independent of the choice of the basis:

\begin{equation}\label{E:BergmanInfiniteSum}
\B(z,w) = \sum_{\alpha \in \ca}\phi_{\alpha}(z)\overline{\phi_{\alpha}(w)}.
\end{equation}

Finally, the Bergman kernel transforms under biholomorphisms in the following way:  Let $F : \Omega \to \widetilde{\Omega}$ be a biholomorphic map of domains in $\C^n$.  Then
\begin{equation}\label{E:BiholoInvariance}
\B_{\Omega}(z,w) = \det{F'(z)} \cdot \B_{\tilde\Omega}(F(z),F(w)) \cdot \overline{\det{F'(w)}}.
\end{equation}

\bigskip

\subsection{The Bergman kernel of $\h_1$}\label{SS:h_1Formula}

The formula for the Bergman kernel of the classical Hartogs triangle has been known for quite some time, at least since Bremermann's paper \cite{Brem55}, in 1955.  Following the spirit of Bremermann's argument, we use formula (\ref{E:BiholoInvariance}) to compute $\B_{\h_1}$.  The map given by $\psi(z_1,z_2) = (\frac{z_1}{z_2},z_2)$ is a biholomorphism of $\h_1$ onto $D \times D^*$, where $D$ is the unit disc and $D^*$ is the punctured disc.  It's easy to see that the Bergman kernel of $D \times D^*$ is the same as that of the $D \times D$, which is well known and given by
\begin{equation}\label{E:BKBiDisc}
\B_{D \times D}(z,w) = \frac{1}{\pi^2 (1-z_1\bar{w}_1)^2 (1-z_2\bar{w}_2)^2} = \B_{D \times D^*}(z,w).
\end{equation}

Seeing that $\det{\psi'(z)} = \frac{1}{z_2}$, equation (\ref{E:BiholoInvariance}) says

\begin{equation}\label{E:BKClassicalHartogsTriangle}
\B_{\h_1}(z,w) = \frac{z_2\bar{w}_2}{\pi^2 (1-z_1\bar{w}_1)^2 (z_2\bar{w}_2-z_1\bar{w}_1)^2}.
\end{equation}

\bigskip

\subsection{Distance to the boundary and asymptotic growth rates}

The following notation will be used in section \ref{SS:DiagBoundaryBehav}.  Given any $z \in \Omega$, define the distance to the boundary of $\Omega$ function by 
\begin{equation*}
\delta_{\Omega}(z) := \min\{\norm{z-\zeta}:\zeta \in b\Omega\},
\end{equation*}  
where $\norm{\cdot}$ denotes Euclidean distance.  When the context is clear, we may omit the subscript.  We will also use the following notation to write inequalities.  If $A$ and $B$ are functions depending on several variables, write $A\lesssim B$ to mean that there is a constant $K>0$, independent of relevant variables, such that $A\leq K\cdot B$. The independence of which variables will be clear in context. Also write $A\approx B$ to mean that $A\lesssim B\lesssim A$.

\bigskip

\bigskip


\section{Bell's Transformation Rule and Derivation of the Kernel}

Equation (\ref{E:BiholoInvariance}) says that the Bergman kernels of two biholomorphic domains are related by a simple formula.  But applications of this transformation rule remain limited by the fact that it's rare to expect two domains in $\C^n$ to be biholomorphic.  There is, however, a more general version of this transformation rule.  In \cite{Bell82c}, Bell proves a generalization which applies whenever we have two domains and a proper holomorphic map from one onto the other.  The statement of this more general transformation rule appears below, and it will be essential to our proof of Theorem \ref{T:fatHartogsformula}.  

We first recall the classical fact that any holomorphic, proper map of $\Omega$ onto $\tilde{\Omega}$ is necessarily a branched covering of finite order.

\begin{theorem}[Bell's transformation rule, \cite{Bell82c}]\label{T:BellTransformation}
Let $\Omega$ and $\tilde{\Omega}$ be domains in $\C^n$ with respective Bergman kernels $\B$ and $\tilde{\B}$, and suppose $\phi$ is a proper holomorphic map of order $k$ from $\Omega$ onto $\tilde{\Omega}$.  Let $u := \det[\phi']$, and let  $\Phi_1, \Phi_2, \cdots, \Phi_k$ be the branch inverses of $\phi$ defined locally on $\tilde{\Omega} - V$, where $V := \{\phi(z): u(z) = 0\}$.  Finally, write $U_j := \det[\Phi_j']$.  Then,
\begin{equation}\label{E:BellFormula}
 u(z)\tilde{\B}(\phi(z),w) = \sum_{j=1}^k \B(z,\Phi_j(w))\overline{U_j(w)}.
\end{equation}
\end{theorem}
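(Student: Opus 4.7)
The plan is to verify Bell's formula by exhibiting both sides of (\ref{E:BellFormula}), as functions of $w$ with $z$ fixed, as the anti-holomorphic representative in $A^2(\tilde{\Omega})$ of the same anti-linear functional $f \mapsto u(z) f(\phi(z))$ on $A^2(\tilde{\Omega})$; uniqueness of the Bergman kernel then forces equality.

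First I would establish that the pullback operator $Tf := (f \circ \phi) \cdot u$ sends $A^2(\tilde{\Omega})$ into $A^2(\Omega)$. Since $\phi$ is a $k$-sheeted branched covering away from the analytic branch locus $V$, and the real Jacobian of a holomorphic map equals $|\det \phi'|^2 = |u|^2$, the change-of-variables formula gives $\|Tf\|_{A^2(\Omega)}^2 = k\,\|f\|_{A^2(\tilde{\Omega})}^2$.

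Next I would apply the reproducing property of $\B$ to $Tf$ and change variables on each sheet using the branch inverses $\Phi_j$, whose real Jacobians are $|U_j|^2$. Two simplifications occur: differentiating $\phi \circ \Phi_j = \mathrm{id}$ off $V$ yields $u(\Phi_j)\, U_j = 1$, hence $u(\Phi_j)\, |U_j|^2 = \overline{U_j}$; and $f(\phi(\Phi_j(\zeta))) = f(\zeta)$. This produces
\begin{equation*}
u(z)\, f(\phi(z)) = \int_{\Omega} \B(z,w)\, u(w)\, f(\phi(w))\, dV(w) = \sum_{j=1}^k \int_{\tilde{\Omega}} \B(z, \Phi_j(\zeta))\, \overline{U_j(\zeta)}\, f(\zeta)\, dV(\zeta).
\end{equation*}
Comparing with the identity $u(z)\, f(\phi(z)) = \int_{\tilde{\Omega}} u(z)\, \tilde{\B}(\phi(z), \zeta)\, f(\zeta)\, dV(\zeta)$ furnished by the reproducing property of $\tilde{\B}$, the difference of the two candidate kernels pairs trivially with every $f \in A^2(\tilde{\Omega})$. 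A Cauchy--Schwarz estimate together with the same change of variables bounds the $L^2(\tilde{\Omega})$-norm of $\sum_j \B(z,\Phi_j(\cdot))\, \overline{U_j(\cdot)}$ by $\sqrt{k\, \B(z,z)}$, so the difference lies in $A^2(\tilde{\Omega})$ and is orthogonal to it, hence vanishes.

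The main obstacle is that the branches $\Phi_j$ and their Jacobian determinants $U_j$ are only locally defined on $\tilde{\Omega}\setminus V$, being permuted by monodromy around the branch locus. The sum $\sum_j \B(z,\Phi_j(\zeta))\, \overline{U_j(\zeta)}$ is nonetheless symmetric in $j$ and so descends to a single-valued anti-holomorphic function on $\tilde{\Omega}\setminus V$. Because $V$ is an analytic set of complex codimension one and the sum is $L^2$-controlled, the identity established on $\tilde{\Omega}\setminus V$ extends across $V$ to all of $\tilde{\Omega}$, giving (\ref{E:BellFormula}).
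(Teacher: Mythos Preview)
The paper does not prove this theorem; it simply cites Bell's original paper \cite{Bell82c} and uses the transformation rule as a black box in the derivation of $\B_k$. So there is no ``paper's own proof'' to compare against.

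That said, your sketch is the standard argument and is correct in outline. A couple of small points worth tightening: when you conclude that the difference of the two candidate kernels vanishes, note that it is the \emph{conjugate} of the difference that lies in $A^2(\tilde\Omega)$; taking $f$ equal to this conjugate then forces the $L^2$-norm to zero. Also, your $L^2$ bound on $\sum_j \B(z,\Phi_j(\cdot))\,\overline{U_j(\cdot)}$ is cleanest if phrased as boundedness of the adjoint: the operator $T^*h(\zeta)=\sum_j h(\Phi_j(\zeta))\,U_j(\zeta)$ is the Hilbert-space adjoint of your pullback $T$, hence bounded with $\|T^*\|=\sqrt{k}$, and the conjugate of your sum is exactly $T^*\B(\cdot,z)$. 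Finally, the extension across $V$ uses the $L^2$ removable-singularity theorem for holomorphic functions across analytic sets of positive codimension, which you invoke but could name explicitly.
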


We're now ready to compute the Bergman kernel of fat Hartogs triangles with integer exponents.

\subsection{Proof of Theorem \ref{T:fatHartogsformula}}
For the rest of this paper, we'll denote the Bergman kernel of $\h_{\gamma}$ by $\B_{\gamma}$.  

\begin{proof} First we need to define the map $\phi$ and it's local inverses $\Phi_1, \cdots, \Phi_k$.  For each integer $k > 1$, $\phi:\h_1 \to \h_k$ given by $\phi(z) = (z_1, z_2^k) := (\phi_1(z),\phi_2(z))$ is a branch covering of order $k$, since
\begin{align*}
\{|\phi_1(z)|^k < |\phi_2(z)| < 1 \} &\Longleftrightarrow \{|z_1|^k < |z_2^k| < 1 \}\\
&\Longleftrightarrow \{|z_1| < |z_2| < 1 \}.
\end{align*}

We note $u(z) = kz_2^{k-1}$, so $V$ is the set $\{z_2 = 0\}$, which is disjoint from $\h_k$.  For each $j = 1, \dots, k$, the map $\Phi_j(z) = (z_1, \zeta^j z_2^{1/k})$ defines a local inverse of $\phi$, where $\zeta = e^{2\pi i/k}$ and $z_2^{1/k}$ is taken to mean the root with argument in the interval $[0, \frac{2\pi}{k})$. From this we see $U_j(z) = \zeta^j z_2^{1/k-1}$.  We now apply Bell's rule (\ref{E:BellFormula}):

\begin{align}
\B_k((z_1,z_2^k),(w_1,w_2)) &= \frac{z_2\bar{w}^{1/k}_2}{k^2 z_2^k\bar{w}_2} \sum_{j=1}^k \B_1((z_1,z_2),(w_1,\zeta^j w_2^{1/k})) \bar{\zeta}^j \notag \\
&= \frac{z_2^2\bar{w}^{2/k}_2}{\pi^2 k^2 z_2^k\bar{w}_2} \sum_{j=1}^k \frac{\bar{\zeta}^{2j}}{(1-z_2 \bar{w}^{1/k}_2 \bar{\zeta}^j)^2 (z_2 \bar{w}^{1/k}_2 \bar{\zeta}^j - z_1\bar{w}_1)^2} \notag \\
&= \label{E:Berg1} \frac{a^{2-k}}{\pi^2 k^2} \sum_{j=1}^k \frac{\bar{\zeta}^{2j}}{(1-a\bar{\zeta}^j)^2 (a\bar{\zeta}^j - s)^2},
\end{align}
where $a=z_2\bar{w}^{1/k}_2$ and $s=z_1\bar{w}_1$.  Define $f_j(a,s) := ({\zeta}^j-a)^2 (a - s{\zeta}^j)^2$ and notice that $\prod_{j=1}^k f_j(a,s) = \prod_{j=1}^k ({\zeta}^j-a)^2 \cdot \prod_{j=1}^k (a - s{\zeta}^j)^2 = (1-a^k)^2(a^k-s^k)^2$. 
 
Now, it follows that
\begin{align}
(\ref{E:Berg1}) &= \frac{a^{2-k}}{\pi^2 k^2} \sum_{j=1}^k \frac{{\zeta}^{2j}}{f_j(a,s)} \notag \\ 
&= \label{E:Berg2} \frac{a^{2-k} \sum_{j=1}^k F_j(a,s)\zeta^{2j}}{\pi^2 k^2(1-a^k)^2(a^k-s^k)^2},
\end{align}
where $F_j(a,s) := \frac{(1-a^k)^2(a^k-s^k)^2}{f_j(a,s)}$.  Notice each $F_j(a,s)$ can be written as a polynomial in $a$ of degree $4k-4$, so the numerator of (\ref{E:Berg2}) takes the following form:

\begin{equation}\label{E:Berg4}
a^{2-k} \sum_{j=1}^k F_j(a,s)\zeta^{2j} = \sum_{j=2-k}^{3k-2} g_j(s)a^j := G(a,s).
\end{equation}

We now wish to calculate the coefficient polynomials $g_j(s)$.  Toward this goal, observe that $G(\zeta^m a,s) = G(a,s)$ for all $m \in \Z$.  This follows because
\begin{align*}
G(\zeta^m a,s) &= (\zeta^m a)^{2-k} \sum_{j=1}^k F_j(\zeta^m a,s)\zeta^{2j}\\
&= a^{2-k} \sum_{j=1}^k \frac{(1-a^k)^2(a^k-s^k)^2}{f_{j-m}(a,s)} \zeta^{2j-2m} = G(a,s).
\end{align*}
Here, we've used the facts that $\label{E:Berg3}f_j(\zeta^m a,s) = \zeta^{4m} f_{j-m}(a,s)$ and $f_j(a,s) = f_{j+mk}(a,s)$ for all $m \in \Z$.  Because $G$ has this invariance, we conclude that
\begin{equation}\label{E:FatNumerator}
G(a,s) = a^{2-k} \sum_{j=1}^k F_j(a,s)\zeta^{2j} = g_{2k}(s)a^{2k}+g_{k}(s)a^{k}+g_{0}(s).
\end{equation}

It remains to calculate $g_{2k}(s), g_{k}(s)$ and $g_{0}(s)$, and these polynomials are obtained in the following lemma.  But to avoid disrupting the flow of the paper with several pages of algebra, we postpone its proof until section \ref{S:PolynomialComps}.

\begin{lemma}\label{L:PolynomialComps} The coefficient polynomials $g_{2k}(s), g_{k}(s)$ and $g_{0}(s)$ are given by the following formulas:
\begin{align}
\label{E:PolyComp1} &g_{2k}(s) = k\sum_{l=1}^{k-1}l(k-l)s^{l-1} := kp_k(s), \\
\label{E:PolyComp2} &g_{k}(s) = k\sum_{l=1}^{k}(l^2+(k-l)^2s^k)s^{l-1} := kq_k(s),\\
\label{E:PolyComp3} &g_0(s) = k\sum_{l=1}^{k-1}l(k-l)s^{k+l-1} = ks^kp_k(s).
\end{align}
\end{lemma}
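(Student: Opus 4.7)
The plan is to evaluate the inner sum $\sum_{j=1}^k \zeta^{2j} F_j(a,s)$ in closed form by partial fractions together with cyclotomic identities, and then match the resulting coefficients with $kp_k(s)$, $kq_k(s)$, and $ks^k p_k(s)$.  First I would rewrite
$$F_j(a,s) = \frac{(a^k-1)^2(a^k-s^k)^2}{(a-\zeta^j)^2(a-s\zeta^j)^2}$$
and use the partial fraction identity
$$\frac{1}{(a-\zeta^j)(a-s\zeta^j)} = \frac{\bar\zeta^j}{1-s}\left(\frac{1}{a-\zeta^j} - \frac{1}{a-s\zeta^j}\right).$$
Squaring this and multiplying by $\zeta^{2j}$ expresses each summand as a $(1-s)^{-2}$ multiple of a combination of the four elementary terms $(a-\zeta^j)^{-2}$, $(a-s\zeta^j)^{-2}$, $\bar\zeta^j(a-\zeta^j)^{-1}$, and $\bar\zeta^j(a-s\zeta^j)^{-1}$.

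Each of these four expressions sums in closed form over $j=1,\ldots,k$.  Differentiating the logarithmic derivatives of $\prod_j(a-\zeta^j)=a^k-1$ and $\prod_j(a-s\zeta^j)=a^k-s^k$ gives
$$\sum_{j=1}^k \frac{1}{(a-\zeta^j)^2} = \frac{ka^{2k-2}+k(k-1)a^{k-2}}{(a^k-1)^2}, \qquad \sum_{j=1}^k \frac{1}{(a-s\zeta^j)^2} = \frac{ka^{2k-2}+k(k-1)s^k a^{k-2}}{(a^k-s^k)^2},$$
while a geometric series expansion in $1/a$ combined with the orthogonality relation $\sum_{j=1}^k \zeta^{jn} = k\cdot\mathbf{1}_{\{k\mid n\}}$ yields
$$\sum_{j=1}^k \frac{\bar\zeta^j}{a-\zeta^j} = \frac{ka^{k-2}}{a^k-1}, \qquad \sum_{j=1}^k \frac{\bar\zeta^j}{a-s\zeta^j} = \frac{ksa^{k-2}}{a^k-s^k}.$$
Substituting these four closed forms and multiplying through by $a^{2-k}(a^k-1)^2(a^k-s^k)^2$, the common factor $a^{k-2}$ cancels uniformly and the expression collapses to a quadratic polynomial in $X:=a^k$.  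After collecting terms one arrives at
$$G(a,s) = \frac{k}{(1-s)^2}\bigl[A(s) X^2 + B(s) X + s^k A(s)\bigr],$$
where $A(s) := (k-1)(1+s^k) - 2T$, $B(s) := 1 + s^{2k} - (4k-2)s^k + 2T(1+s^k)$, and $T := s + s^2 + \cdots + s^{k-1}$.

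The main obstacle is the final algebraic identification: showing that $A$ and $B$ are each divisible by $(1-s)^2$ with quotients $p_k$ and $q_k$ respectively.  Since $T\big|_{s=1} = k-1$, one checks by a short calculation that $s=1$ is a double root of both $A$ and $B$, legitimizing the division.  For $A$, the identity $(1-s)^2 p_k(s) = A(s)$ can be established using the generating function $\sum_{l\geq 1}l(k-l)s^{l-1} = \frac{k}{(1-s)^2} - \frac{1+s}{(1-s)^3}$, subtracting off the tail $\sum_{l\geq k}l(k-l)s^{l-1}$, and clearing denominators; this simultaneously handles $g_0 = ks^k p_k$, since it differs from $g_{2k}$ only by the factor $s^k$.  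For $B$, an analogous but slightly longer generating-function comparison using $\sum_{l\geq 1}l^2 s^{l-1}=\frac{1+s}{(1-s)^3}$ and its shifts identifies $B(s)/(1-s)^2$ with $q_k(s)$, completing the verification of \eqref{E:PolyComp1}--\eqref{E:PolyComp3}.
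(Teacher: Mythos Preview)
Your argument is correct and takes a genuinely different route from the paper.

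The paper proceeds by first factoring $F_j(a,s)$ as the product of the two geometric sums $\frac{a^k-1}{a-\zeta^j}=\sum_{m=1}^k a^{k-m}\zeta^{j(m-1)}$ and $\frac{a^k-s^k}{a-s\zeta^j}=\sum_{n=1}^k a^{k-n}(s\zeta^j)^{n-1}$, organizing the resulting double sum into three blocks $A+B+C$ according to the power of $a$, squaring, and then reading off the coefficients of $a^{3k-2}$, $a^{2k-2}$, $a^{k-2}$ in $F_j(a,s)\zeta^{2j}$ one at a time.  A key point of that computation is that each of these three coefficients turns out to be independent of $j$, so the sum over $j$ simply contributes the factor $k$.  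A second combinatorial lemma then simplifies the resulting sums of products of partial geometric series $h_l(s)=\sum_{r=0}^{l}s^r$ into the closed forms $p_k$ and $q_k$.

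Your approach bypasses the term-by-term coefficient extraction entirely: the partial-fraction decomposition together with the logarithmic-derivative and orthogonality identities lets you sum over $j$ in one stroke and land directly on a quadratic in $X=a^k$.  This is more conceptual and considerably less bookkeeping; it also explains structurally why only the powers $a^{0},a^{k},a^{2k}$ survive, whereas in the paper this emerges from the earlier invariance argument $G(\zeta^m a,s)=G(a,s)$.  The cost of your route is the appearance of the auxiliary denominator $(1-s)^2$, which must then be shown to divide $A(s)$ and $B(s)$; you handle this by checking the double root at $s=1$ and invoking generating-function identities for $\sum l(k-l)s^{l-1}$ and $\sum l^2 s^{l-1}$.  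That last identification is only sketched in your write-up, but since both sides are explicit polynomials of degree at most $2k-1$, it can equally well be finished by direct coefficient comparison.  Either approach is complete; yours is shorter, while the paper's stays entirely within elementary polynomial algebra and makes the $j$-independence visible at each step.
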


Using this lemma and letting $t := a^k = z_2^k \bar{w}_2$, we see from (\ref{E:Berg2}) that
\begin{equation*}
\B_k((z_1,z_2^k),(w_1,w_2)) = \frac{p_k(s)t^2+q_k(s)+s^kp_k(s)}{k\pi^2(1-t)^2(t-s^k)^2}.
\end{equation*}
This is the desired formula for $\B_k$, except that both sides are a function of $z_2^k$.  This is remedied by formally replacing the variable $z_2^k$ with $z_2$.
\end{proof}

\begin{remark}
It's also true that the Bergman kernel of $\h_{m/n}$ is a rational function whenever $m,n \in \Z^+$.  Indeed, the map $(z_1,z_2) \mapsto (z_1z_2^{n-1},z_2^m)$ is a proper map from $\h_1$ onto $\h_{m/n}$, so Bell's formula gives $\B_{m/n}$ as a finite sum.  In \cite{Zapa16}, Zapalowski characterizes the proper maps between fat Hartogs triangles.  He shows there is a proper map $F:\h_{m/n} \to \h_{p/q}$ if and only if there are $a,b \in \Z^+$ such that
\begin{equation*}
\frac{aq}{p} - \frac{bn}{m} \in \Z.
\end{equation*}

Zapalowski's description of proper maps shows that the methods employed in this paper aren't able to say anything about fat Hartogs triangles $\h_{\gamma}$, for irrational $\gamma$. 
\end{remark}

\begin{remark}\label{R:Ramadanov}
Ramadanov's theorem says that if $\{\Omega_k\}$ is an increasing family of domains such that $\Omega_k \to \Omega \subset \subset \C^n$, then $\B_{\Omega_k}(z,w) \to \B_{\Omega}(z,w)$ absolutely and uniformly on compact subsets of $\Omega \times \Omega$.  See \cite{Ramadanov67} for the first appearance of the fact, and \cite{Boas96} for a generalization in the smoothly bounded, pseudoconvex case.  Notice that $\{ \h_k \}$ is an increasing family and that $\h_k \to D \times D^*$ as $k \to \infty$.  Ramadanov's theorem shows that $\B_k(z,w) \to \B_{D \times D^*}(z,w)$, which is given in equation (\ref{E:BKBiDisc}).  This is difficult to see from direct computation.
\end{remark}

\subsection{Biholomorphism classes of domains}

Let $\psi(z) = (\psi_1(z),\psi_2(z)) := (\frac{z_1}{z_2}, z_2)$.  In section \ref{SS:h_1Formula}, we used the fact that $\psi: \h_1 \to D \times D^*$ is a biholomorphism to compute the Bergman kernel of $\h_1$.  We'll give a very similar argument to prove Theorem \ref{T:thinHartogsformula}.  Let $\Psi(z) = (z_1z_2,z_2)$, and see that $\Psi: D \times D^* \to \h_1$ is the inverse of $\psi$.  Now, notice for all $k \in \Z^+$, $\psi : \h_{1/(k+1)} \to \h_{1/k}$ is also a biholomorphism, because
\begin{align*}
\{|\psi_1(z)|^{1/k} < |\psi_2(z)| < 1 \} &\Longleftrightarrow \{|\psi_1(z)| < |\psi_2(z)|^k < 1 \}\\
&\Longleftrightarrow \bigg\{\bigg|\frac{z_1}{z_2}\bigg| < |z_2|^k < 1 \bigg\}\\
&\Longleftrightarrow \{|z_1| < |z_2|^{k+1} < 1 \}\\
&\Longleftrightarrow \{|z_1|^{1/(k+1)} < |z_2| < 1 \}.
\end{align*}

Let $\psi^k := \psi \circ \cdots \circ \psi$ be $k$ copies of $\psi$ composed together, so $\psi^k(z) := (z_1z_2^{-k}, z_2)$.  This gives a biholomorphism from $\h_{1/k} \to D \times D^*$ with inverse $\Psi^k := \big(z_1z_2^k,z_2\big)$.  We illustrate this chain of biholomorphisms below.

\begin{equation*}
\begin{matrix}
&\Psi&&\Psi&&\Psi \qquad \Psi&&\Psi&\\
D\times D^*&\rightleftharpoons &\h_1&\rightleftharpoons&\h_{1/2}&\rightleftharpoons \cdots \rightleftharpoons &\h_{1/k} &\rightleftharpoons &\cdots \\
&\psi&&\psi&&\psi \qquad \psi&& \psi &
 
\end{matrix}
\end{equation*}

\subsection{Proof of Theorem \ref{T:thinHartogsformula}}  Using the biholomophism $\psi^k:\h_{1/k} \to D \times D^*$ we can easily prove the desired formula.

\begin{proof} Since $\det [\psi^k]'(z) = z_2^{-k}$,
\begin{align*}
\B_{1/k}(z,w) = \frac{1}{z_2^{k}\bar{w}_2^{k}} \B_1(\psi_k(z),\psi_k(w))
= \frac{z_2^k\bar{w}_2^k}{\pi^2 (1-z_1\bar{w}_1)^2 (z_2^k\bar{w}^k_2-z_1\bar{w}_1)^2}.
\end{align*}
\end{proof}

\begin{remark}
For $m,n \in  \Z^+$, the map $\psi(z) = (\frac{z_1}{z_2}, z_2)$ also gives a biholomorphism from $\h_{m/(n+m)}$ onto $\h_{m/n}$.  Applying this map recursively, we see that $\h_{m/(n+km)}$ and $\h_{m/n}$ and biholomorphic for all $k \in \Z^+$.
\end{remark}


\section{Consequences of the Kernel Formulas}

\subsection{The Lu Qi-Keng Problem}

One of the long-standing open problems in Bergman theory is to classify the domains  for which the Bergman kernel is nowhere vanishing.  This question was first raised by Lu Qi-Keng in \cite{LQK66}.  We say that a domain $\Omega \subset \C^n$ is Lu Qi-Keng when it has zero-free Bergman kernel, and the investigation of which domains have a zero-free Bergman kernel is known as the Lu Qi-Keng problem.  See \cite{Boas00} for a good historical survey, a few key points of which we now summarize.

The situation in the complex plane is relatively straightforward.  When $\Omega \subset \C$ is simply connected, the Riemann mapping theorem together with equation (\ref{E:BiholoInvariance}) show that $\Omega$ is a Lu Qi-Keng domain, since the Bergman kernel of the unit disc is non-vanishing.  But a finitely-connected domain in $\C$ with at least two non-singleton boundary components is not Lu Qi-Keng.  See \cite{Rosenthal69} and \cite{Skwar69} when $\Omega$ is an annulus, and \cite{Bell_CTPTCM} for a more general class of domains.  

There is no such simple characterization of the situation known in higher dimensions.  In \cite{BFS99}, it's shown there are smoothly bounded, strongly convex domains with real analytic boundary that are not Lu Qi-Keng in $\C^n$, when $n \ge 3$.  Contrary to previous expectations, Boas shows in \cite{Boas96} that `most' pseudoconvex domains (with respect to a certain topology on the set of domains in $\C^n$) have vanishing Bergman kernel.  Nevertheless, it is still desirable to understand why domains from certain classes have zero-free Bergman kernels, while domains from closely related classes may not.  We now address this problem in the case of the domains $\h_{\gamma}$, where $\gamma \in \Z^+$ and $\gamma^{-1} \in \Z^+$.

Using the explicit formulas for the Bergman kernels computed in the previous section, we can check whether or not these domains are Lu Qi-Keng.  The following corollary is immediate from equation (\ref{E:thinHartogsformula}), whose numerator vanishes if and only if at least one of $z_2$ or $w_2$ equals zero.

\begin{corollary}\label{C:LQKthinHartogs}
Let $k$ be a positive integer.  The thin Hartogs triangle $\h_{1/k}$ is a Lu Qi-Keng domain.
\end{corollary}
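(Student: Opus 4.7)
The plan is to read the statement off directly from the closed-form expression in Theorem \ref{T:thinHartogsformula}. Writing
\[
\B_{1/k}(z,w) = \frac{t^k}{\pi^{2}(1-t)^{2}(t^{k}-s)^{2}}, \qquad s = z_{1}\bar w_{1},\ t = z_{2}\bar w_{2},
\]
the Bergman kernel is a quotient of holomorphic/antiholomorphic expressions whose denominator is finite and nonzero on $\h_{1/k}\times\h_{1/k}$ (this is implicit in the fact that $\B_{1/k}$ is a well-defined reproducing kernel on this domain, but it can also be checked directly: $|t|<1$ and $|s|<|t|^{k}$ for $z,w\in\h_{1/k}$). Therefore $\B_{1/k}(z,w)=0$ if and only if the numerator $t^{k} = (z_{2}\bar w_{2})^{k}$ vanishes.

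To finish, I would just note that the numerator factor $t^{k}=(z_{2}\bar w_{2})^{k}$ is zero precisely when $z_{2}=0$ or $w_{2}=0$, and neither situation is possible inside $\h_{1/k}$. Indeed the defining inequality $|z_{1}|^{1/k}<|z_{2}|<1$ forces $|z_{2}|>0$ for every $z\in\h_{1/k}$, and likewise $|w_{2}|>0$ for every $w\in\h_{1/k}$. Hence $\B_{1/k}(z,w)\neq 0$ for all $(z,w)\in\h_{1/k}\times\h_{1/k}$, which is the definition of Lu Qi-Keng. There is no genuine obstacle here; the content of the corollary is really packaged in Theorem \ref{T:thinHartogsformula}, and the role of this proof is simply to observe that the explicit numerator does not meet the domain.
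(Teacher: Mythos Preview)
Your argument is correct and matches the paper's own treatment: the corollary is stated there as immediate from the explicit formula for $\B_{1/k}$, with the observation that the numerator $t^{k}$ vanishes only when $z_{2}=0$ or $w_{2}=0$, neither of which occurs in $\h_{1/k}$. Your additional check that the denominator is nonvanishing on $\h_{1/k}\times\h_{1/k}$ (via $|t|<1$ and $|s|<|t|^{k}$) is a harmless elaboration of the same idea.
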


For fat Hartogs triangles with integer exponent $k \ge 2$, we deduce the following corollary from equation (\ref{E:fatHartogsformula}).

\begin{corollary}\label{C:LQKfatHartogs}
Let $k \ge 2$ be an integer. The fat Hartogs triangle $\h_k$ is not a Lu Qi-Keng domain.
\end{corollary}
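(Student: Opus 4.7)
The plan is to exhibit a zero of $\B_k$ by locating a zero of the numerator
\[
N(s, t) := p_k(s) t^2 + q_k(s) t + s^k p_k(s)
\]
of formula (\ref{E:fatHartogsformula}) at an admissible parameter pair $(s, t)$, then realizing that pair as $(z_1 \bar w_1, z_2 \bar w_2)$ for some $(z, w) \in \h_k \times \h_k$.

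First I would record the key reduction: a pair $(s, t) \in \C^2$ arises as $(z_1 \bar w_1, z_2 \bar w_2)$ with $(z, w) \in \h_k \times \h_k$ if and only if $|s|^k < |t| < 1$. The forward direction follows from the defining inequalities of $\h_k$; for the converse, take $|z_1| = |w_1| = |s|^{1/2}$, $|z_2| = |w_2| = |t|^{1/2}$, and adjust phases so that $z_1 \bar w_1 = s$ and $z_2 \bar w_2 = t$. Throughout this admissible region the denominator of (\ref{E:fatHartogsformula}) is nonzero, so a zero of $N$ here yields a zero of $\B_k$.

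Next, I would set $s = 0$. Only the $l = 1$ summands contribute to $p_k$ and $q_k$, giving $p_k(0) = k - 1$ and $q_k(0) = 1$, hence
\[
N(0, t) = (k - 1) t^2 + t,
\]
whose nontrivial root is $t_0 = -1/(k-1)$. For $k \geq 3$, $|t_0| = 1/(k-1) \in (0, 1)$, so $(0, t_0)$ lies in the admissible region, and the corresponding $(z, w)$ witnesses the failure of the Lu Qi-Keng property.

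The main obstacle will be the case $k = 2$, where $t_0 = -1$ sits exactly on the unit circle, just outside the admissible region. My plan is to perturb into the open disc: since $\partial_t N(0, -1) = -1 \neq 0$, the implicit function theorem produces a holomorphic function $t(s)$ near $s = 0$ with $t(0) = -1$ and $N(s, t(s)) \equiv 0$. A short computation gives $\partial_s N(0, -1) = -4$, so $t'(0) = -4$ and $t(s) = -1 - 4s + O(s^2)$. Taking $s$ to be a small real negative number then places $t(s)$ real and strictly greater than $-1$, so $|t(s)| < 1$, while $|s|^2$ stays even smaller. This furnishes an admissible zero of $\B_2$ and completes the argument. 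The principal delicacy throughout is precisely ensuring that the candidate parameter pair lies strictly inside $|s|^k < |t| < 1$ rather than on its boundary.
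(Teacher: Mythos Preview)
Your proof is correct. For $k \ge 3$ it coincides with the paper's argument: setting $s = 0$ yields the root $t_0 = -1/(k-1)$, which the paper realizes via the explicit points $z = (0,\, i/\sqrt{k-1})$, $w = (0,\, -i/\sqrt{k-1})$.

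For $k = 2$ your route differs from the paper's. The paper simply writes down an explicit pair
\[
z = \Big(\tfrac{i}{\sqrt{2}},\ \tfrac{\sqrt{7}+i}{4}\Big),\qquad w = \Big(\tfrac{-i}{\sqrt{2}},\ \tfrac{\sqrt{7}-i}{4}\Big),
\]
corresponding to $s = -\tfrac12$ and $t = (3+i\sqrt{7})/8$, and verifies by hand that $\B_2(z,w)=0$. You instead perturb the boundary solution $(s,t)=(0,-1)$ into the admissible region $\{|s|^2 < |t| < 1\}$ via the implicit function theorem. Your argument is more conceptual and explains \emph{why} a nearby zero must exist without one having to guess it; the paper's is shorter but appears unmotivated. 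One small point worth making explicit in your version: for real $s$ the branch $t(s)$ is real (since $N$ has real coefficients and the root at $(0,-1)$ is simple), which is what justifies passing from $t(s)>-1$ to $|t(s)|<1$.
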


\begin{proof}  

First consider the case where $k \ge 3$.  Let $z = (0,\frac{i}{\sqrt{k-1}})$ and $w = (0,\frac{-i}{\sqrt{k-1}})$.  Then $z,w \in \h_k$.  Since $p_k(0) = k-1$ and $q_k(0) = 1$, we see that $\B_k(z,w) = 0$.  When $k=2$, let $z = (\frac{i}{\sqrt{2}},\frac{\sqrt{7} + i}{4})$ and $w = (\frac{-i}{\sqrt{2}},\frac{\sqrt{7} - i}{4})$.  It is easily checked that $z,w \in \Omega_2$ and that $\B_2(z,w) = 0$.
\end{proof}

It's immediate from equation (\ref{E:BiholoInvariance}) that a non-vanishing Bergman kernel is a biholomorphic invariant.  Corollary \ref{C:LQKfatHartogs} lets us deduce the following:

\begin{corollary}\label{C:biholoInequiv}  Let $k \ge 2$ be an integer.  $\h_k$ is not biholomorphic to $D \times D^*$.
\end{corollary}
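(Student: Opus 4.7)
The plan is to argue by contradiction using the biholomorphic transformation rule (\ref{E:BiholoInvariance}) together with Corollary \ref{C:LQKfatHartogs}. The key observation is that the Lu Qi-Keng property is preserved under biholomorphism: indeed, if $F:\Omega \to \widetilde{\Omega}$ is a biholomorphism, then $\det F'$ is nowhere vanishing on $\Omega$, and equation (\ref{E:BiholoInvariance}) exhibits $\B_\Omega(z,w)$ as the product of $\det F'(z)$, $\B_{\widetilde{\Omega}}(F(z),F(w))$, and $\overline{\det F'(w)}$; hence $\B_\Omega$ and $\B_{\widetilde{\Omega}}$ vanish at corresponding pairs of points.

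Next, I would note that $D \times D^*$ is Lu Qi-Keng. This is immediate from equation (\ref{E:BKBiDisc}): the Bergman kernel of $D \times D^*$ agrees with that of $D \times D$, which is
\[
\B_{D \times D^*}(z,w) = \frac{1}{\pi^{2}(1-z_{1}\bar{w}_{1})^{2}(1-z_{2}\bar{w}_{2})^{2}},
\]
and this is nowhere zero on $(D \times D^*) \times (D \times D^*)$.

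Finally, suppose for contradiction that there is a biholomorphism $F:\h_k \to D \times D^*$. Applying (\ref{E:BiholoInvariance}), for every $z,w \in \h_k$ we would have
\[
\B_k(z,w) = \det F'(z) \cdot \B_{D \times D^*}(F(z),F(w)) \cdot \overline{\det F'(w)},
\]
and each factor on the right is nonzero. Thus $\B_k$ would be nowhere vanishing on $\h_k \times \h_k$, contradicting Corollary \ref{C:LQKfatHartogs}, which exhibits explicit points $z,w \in \h_k$ at which $\B_k(z,w)=0$ for every integer $k \ge 2$. There is no real obstacle here, since all the heavy lifting has already been done in the proof of Corollary \ref{C:LQKfatHartogs}; the only thing to verify is the trivial fact that $\B_{D \times D^*}$ is zero-free, which is read off directly from (\ref{E:BKBiDisc}).
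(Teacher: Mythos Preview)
Your proof is correct and follows exactly the approach the paper uses: the sentence preceding the corollary already notes that the Lu Qi-Keng property is a biholomorphic invariant by (\ref{E:BiholoInvariance}), and the corollary is then immediate from Corollary \ref{C:LQKfatHartogs} together with the zero-free formula (\ref{E:BKBiDisc}). You have simply made explicit what the paper leaves as a one-line deduction.
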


\begin{remark}\label{R:nonintegerLQK}
Using Ramadanov's theorem in conjunction with Hurwitz's theorem on zeroes of holomorphic functions, we see that for each integer $k \ge 2$, there is an $s_k \in [k-1,k)$ such that for all $\gamma \in (s_k,k]$, the Bergman kernel $\B_{\gamma}$ of $\h_{\gamma}$ has zeroes.  It seems plausible to conjecture that $s_k = k-1$, i.e., that no fat Hartogs triangle of exponent $\gamma > 1$ is Lu Qi-Keng.  
\end{remark}

\begin{remark}\label{R:LQKLimit}
As was mentioned in Remark \ref{R:Ramadanov}, $\h_k \to D \times D^*$ as $k \to \infty$.  The Bergman kernel $\B_{D \times D^*}$ is zero free, so for any fixed compact subset $K \subset D \times D^*$, Ramadanov's theorem tells us that the Bergman kernel $\B_k$ restricted to $K$ is zero free for all $k$ sufficiently large.  We see this happen as the zero of $\B_k$ provided in the proof of Corollary \ref{C:LQKfatHartogs} is pushed to the origin.  It would be interesting to do further analysis of the zero set for the polynomial in the numerator of $\B_k$.
\end{remark}

\subsection{Diagonal boundary behavior}\label{SS:DiagBoundaryBehav}

The asymptotic behavior of $\B_{\Omega}(z,z)$ as $z$ tends to the boundary has been studied for many classes of smoothly bounded, pseudoconvex domains.  \cite{HormanderActa65} and  \cite{Fefferman74} are two seminal papers dealing with the strongly pseudoconvex case.  Results also exist for many classes of smoothly bounded, weakly pseudoconvex domains.  See \cite{McNeal89}, \cite{Catlin89}, \cite{NagRosSteWai89} for finite-type domains in $\C^2$, and \cite{McNeal94} for finite-type, convex domains in $\C^n$.  Refer to \cite{Fu14} for analogous results on smoothly bounded domains with constant Levi rank.  But all these estimates are for classes of domains with boundary smoothness, and there are presently no general theorems about the behavior of $\B_{\Omega}(z,z)$ for pseudoconvex domains near singular boundary points.

Using the explicit formulas for the Bergman kernel, we establish the following lemma.

\begin{lemma}\label{L:fatHartogsformulaApprox}
Let $k \in \Z^+$.  Then we have the following asympotic behavior of the Bergman kernel restricted to the diagonal:
\begin{align}
&\B_k(z,z) \approx \frac{1}{(1-|z_2|)^2(|z_2|-|z_1|^k)^2}, \qquad z \in \h_k. \label{E:fatHartogsformulaApprox}
\end{align}
\end{lemma}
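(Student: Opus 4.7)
The plan is to specialize the formula (\ref{E:fatHartogsformula}) to $w=z$ and compare it directly with the claimed asymptotic. Writing $u := |z_2|$ and $v := |z_1|^k$, the hypothesis $z \in \h_k$ gives $0 \le v < u < 1$, and the variables in the formula become $s = v^{2/k}$ and $t = u^2$. The denominator factors cleanly as
\[
(1-t)^2(t-s^k)^2 = (1-u)^2(1+u)^2(u-v)^2(u+v)^2,
\]
so the two factors $(1-u)^2(u-v)^2$ already account for the denominator of (\ref{E:fatHartogsformulaApprox}), and the proof reduces to showing, with constants depending only on $k$, that the numerator
\[
N(s,t) := p_k(s)\bigl(u^4+v^2\bigr) + q_k(s)\,u^2
\]
is comparable to $(1+u)^2(u+v)^2$.

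Since $u\in[0,1)$, the factor $(1+u)^2$ lies in $[1,4]$ and is absorbed into the constants. The key structural observation is that both $p_k$ and $q_k$, as defined in Theorem~\ref{T:fatHartogsformula}, have only non-negative coefficients in $s$, so on $s\in[0,1]$ they are pinched between their values at $0$ and $1$: for $k\ge 2$ this reads $p_k(s)\in[k-1,\,p_k(1)]$ and $q_k(s)\in[1,\,q_k(1)]$. Hence $N(s,t)\approx u^4+v^2+u^2$ with $k$-dependent constants, and the final comparison $N\approx(u+v)^2$ follows from the elementary inequalities $u^4+v^2+u^2 \le 2u^2+v^2 \le 2(u+v)^2$ (using $u<1$) together with $u^4+v^2+u^2 \ge u^2+v^2 \ge \tfrac{1}{2}(u+v)^2$.

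The degenerate case $k=1$ has to be treated separately because $p_1\equiv 0$ destroys the sandwich above, but it is trivial directly: $q_1\equiv 1$ gives $N=u^2$, and $v<u$ gives $(u+v)^2\in[u^2,4u^2]$. There is no serious obstacle in this proof beyond bookkeeping; the substantive ingredient is the non-negativity of the coefficients of $p_k$ and $q_k$, which keeps the numerator uniformly bounded away from zero relative to $(u+v)^2$ and so permits the comparability to hold on all of $\h_k$ rather than only near the boundary.
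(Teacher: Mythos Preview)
Your proof is correct and follows essentially the same approach as the paper's. Both arguments hinge on the non-negativity of the coefficients of $p_k$ and $q_k$ to pin the numerator between $k$-dependent multiples of $t=|z_2|^2$, and both handle the denominator by factoring $1-t=(1-|z_2|)(1+|z_2|)$ and $t-s^k=(|z_2|-|z_1|^k)(|z_2|+|z_1|^k)$; the only cosmetic difference is that the paper compares the numerator directly to $t$ (which already absorbs the $k=1$ case), whereas you compare it to $(u+v)^2$ and then treat $k=1$ separately.
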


\begin{proof}
In this proof we are concerned with $\B_k(z,z)$, so write $s := |z_1|^2$ and $t := |z_2|^2$.  From Theorem \ref{T:fatHartogsformula} we see that 
\begin{align}
\B_k(z,z) = \frac{p_{k}(s)t^{2}+q_{k}(s)t + s^{k}p_{k}(s)}{k\pi^{2}(1-t)^{2}(t -s^{k})^{2}} \label{E:fatHartogsformulaDiag},
\end{align}
where $p_k(s)$ and $q_k(s)$ are given in the statement of Theorem \ref{T:fatHartogsformula}.  We now estimate the numerator of (\ref{E:fatHartogsformulaDiag}).  Notice that $q_k(s) \ge 1$ for all $s \in [0,1)$, and so
\begin{align*}
t &\le p_k(s)t^2 + q_k(s)t + s^kp_k(s) \\
& < t[2p_k(1) + q_k(1)] \\
& \lesssim t,
\end{align*}
since $s^k < t$.  Now estimate the terms in the denominator.  It's easy to see that both
\begin{align*}
&(1-t)^2 \approx  (1-|z_2|)^2,  \\
&(t-s^k)^2 \approx  |z_2|^2(|z_2|-|z_1|^k)^2.
\end{align*}
Here, we've used the fact that $|z_2|^2\le(|z_2|+|z_1|^k)^2<4|z_2|^2$. Putting these estimates together, we obtain (\ref{E:fatHartogsformulaApprox}).  
\end{proof}

Let $\Omega \subset \C^2$ be a bounded domain and $\zeta \in b\Omega$ a smooth, Levi-flat boundary point.  It can be shown that $\B_{\Omega}(z,z) \approx \delta_{\Omega}(z)^{-2}$ as $z \to \zeta$.  See \cite{Fu14} for more information.  The domains $\h_{k}$ are Levi-flat at all smooth boundary points, because the smooth parts of the boundary can be locally foliated by analytic discs.  We explicitly see this asymptotic behavior from estimate (\ref{E:fatHartogsformulaApprox}).  In fact, this estimate also lets us determine the asymptotic growth rate of $\B_{k}(z,z)$ as $z$ tends to the boundary singularity at the origin.  When $z$ is sufficiently close to $0$, it's straightforward to see $|z_2|-|z_1|^k \approx \delta_k(z)$, the distance of $z$ to the boundary of $\h_k$.  From this, we deduce

\begin{theorem}
Let $k \in \Z^+$ and $\delta_k(z)$ be the distance of $z$ to $b\h_k$.  Then 
\begin{align*}
 \B_k(z,z) \approx \delta_k(z)^{-2} \qquad as \quad z \to 0.
\end{align*}
\end{theorem}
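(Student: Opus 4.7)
The plan is to reduce the theorem to the geometric estimate $\delta_k(z) \approx |z_2| - |z_1|^k$ as $z \to 0$ and then to verify that estimate directly. Starting from Lemma \ref{L:fatHartogsformulaApprox},
\[
\B_k(z,z) \approx \frac{1}{(1-|z_2|)^2(|z_2|-|z_1|^k)^2},
\]
and since $|z_2| \to 0$ as $z \to 0$, the factor $(1-|z_2|)^2$ is pinched between two positive constants on a small neighborhood of the origin. Thus the theorem follows once the distance estimate is in hand.

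For the distance estimate, write $b\h_k = \{|w_2| = 1\} \cup \Sigma$, where $\Sigma := \overline{\h_k} \cap \{|w_2| = |w_1|^k\}$. The first piece is uniformly bounded away from $0$, so for $z$ near the origin only $\Sigma$ matters. The upper bound $\delta_k(z) \le |z_2| - |z_1|^k$ is obtained by exhibiting the explicit point $(z_1, |z_1|^k\, z_2/|z_2|) \in \Sigma$, at Euclidean distance exactly $|z_2|-|z_1|^k$ from $z$ (with the obvious modification if $z_2 = 0$). For the lower bound, use the invariance of $\Sigma$ under $(w_1, w_2) \mapsto (e^{i\alpha} w_1, e^{i\beta} w_2)$ to optimize over the phases and reduce the minimization of $|z-w|^2$ over $w \in \Sigma$ to minimizing $g(\rho) := (|z_1|-\rho)^2 + (|z_2|-\rho^k)^2$ over $\rho \ge 0$.

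I would bound $g(\rho)$ from below by a case split on $|\rho - |z_1||$. If $|\rho - |z_1|| \ge \tfrac{1}{2}(|z_2| - |z_1|^k)$, then $g(\rho) \ge \tfrac{1}{4}(|z_2| - |z_1|^k)^2$ directly. In the remaining case, the factorization
\[
\rho^k - |z_1|^k = (\rho - |z_1|)\bigl(\rho^{k-1} + \rho^{k-2}|z_1| + \cdots + |z_1|^{k-1}\bigr),
\]
together with the boundedness of $\rho$ and $|z_1|$ by a small constant when $z$ is near $0$, shows that $|\rho^k - |z_1|^k| \le \tfrac{1}{2}(|z_2|-|z_1|^k)$. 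This forces $\bigl||z_2|-\rho^k\bigr| \ge \tfrac{1}{2}(|z_2|-|z_1|^k)$, and so $g(\rho) \ge \tfrac{1}{4}(|z_2|-|z_1|^k)^2$ again. The main obstacle is this second case: one must choose the neighborhood of the origin small enough that $k(\max\{\rho, |z_1|\})^{k-1} \le 1$, which is automatic for $k = 1$ but imposes a $k$-dependent restriction when $k \ge 2$. Combining the two bounds with the reduction from the lemma then yields $\B_k(z,z) \approx \delta_k(z)^{-2}$ as $z \to 0$.
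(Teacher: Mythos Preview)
Your proposal is correct and follows the same route as the paper: invoke Lemma~\ref{L:fatHartogsformulaApprox}, absorb the $(1-|z_2|)^2$ factor near the origin, and reduce to the geometric estimate $\delta_k(z)\approx |z_2|-|z_1|^k$. The paper simply asserts this last estimate as ``straightforward,'' whereas you supply a clean verification via the phase reduction and the case split on $|\rho-|z_1||$; the only superfluous remark is the parenthetical about $z_2=0$, which cannot occur for $z\in\h_k$.
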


\begin{remark}
Following steps analogous to those in Lemma \ref{L:fatHartogsformulaApprox}, we can show
\begin{align*}
\B_{1/k}(z,z) \approx \frac{1}{(1-|z_2|)^2(|z_2|^k-|z_1|)^2}, \qquad z \in \h_{1/k}. \label{E:thinHartogsformulaApprox}
\end{align*}
This estimate can be used to determine the asymptotic growth rate of $\B_{1/k}(z,z)$ as $z$ tends to boundary singularity at the origin.  When $z$ is sufficiently close to $0$, it's straightforward to check that $|z_2|^k-|z_1| \approx \delta_{1/k}(z)$, the distance of $z$ to $b\h_{1/k}$.  From this we conclude that $\B_{1/k}(z,z) \approx \delta_{1/k}(z)^{-2}$ as $z \to 0$.
\end{remark}


\section{Proof of Lemma \ref{L:PolynomialComps}}\label{S:PolynomialComps}
Equation (\ref{E:FatNumerator}) tells us that 
\begin{equation}
a^{2-k} \sum_{j=1}^k F_j(a,s)\zeta^{2j} = g_{2k}(s)a^{2k}+g_{k}(s)a^{k}+g_{0}(s).
\end{equation}
We'll prove Lemma \ref{L:PolynomialComps} by splitting the calculation of $g_{2k}(s)$, $g_{k}(s)$ and $g_{0}(s)$ into two separate lemmas.

\begin{lemma}\label{L:CoeffComp1}
Let $h_l(s) := \sum_{r=0}^l s^r$.  For each $j=1, \dots, k$, the respective coefficient functions of the $a^{3k-2}, a^{2k-2}$ and $a^{k-2}$ terms of $F_j(a,s)\zeta^{2j}$ are equal to the following:
\begin{align*}
&a^{3k-2}: \qquad \sum_{l=0}^{k-2}h_l(s)h_{k-2-l}(s),\\
&a^{2k-2}: \qquad 2\sum_{l=0}^{k-2} s^{k-1-l} h_l(s)^2 + h_{k-1}(s)^2,\\
&a^{k-2}: \qquad s^k \sum_{l=0}^{k-2} h_l(s) h_{k-2-l}(s).
\end{align*}
In particular, note that these expressions have no $j$ dependence.
\end{lemma}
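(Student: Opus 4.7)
The plan is to factor $F_j$ as the square of an explicit polynomial of degree $2k-2$, then extract the three desired coefficients by convolution. Setting
\[
P_j(a,s) := \frac{(1-a^k)(a^k - s^k)}{(\zeta^j - a)(a - s\zeta^j)},
\]
the denominator divides the numerator because $\zeta^j$ is a root of $1-a^k$ and $s\zeta^j$ is a root of $a^k - s^k$, so $P_j$ is a polynomial in $a$ with $F_j = P_j^2$. Using the identity $\zeta^{jk} = 1$ together with the geometric-series formula $(x^k - y^k)/(x-y) = \sum_{r=0}^{k-1} x^{k-1-r}y^r$, the two factors admit the explicit expansions
\[
\frac{1-a^k}{\zeta^j - a} = \sum_{r=0}^{k-1} \zeta^{-j(r+1)} a^r, \qquad \frac{a^k - s^k}{a - s\zeta^j} = \sum_{r=0}^{k-1} s^r \zeta^{jr} a^{k-1-r}.
\]

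Multiplying these series and regrouping by powers of $a$ yields
\[
P_j(a,s) = \sum_{m=0}^{2k-2} \zeta^{-j(m+2)}\, c_m(s)\, a^m, \qquad c_m(s) = \begin{cases} s^{k-1-m}\, h_m(s), & 0 \le m \le k-1,\\ h_{2k-2-m}(s), & k-1 \le m \le 2k-2, \end{cases}
\]
with both branches agreeing at $m = k-1$. The structural point is that all dependence on $\zeta^j$ sits in the single prefactor $\zeta^{-j(m+2)}$. Squaring and multiplying by $\zeta^{2j}$ then gives
\[
\bigl[a^M\bigr]\bigl(F_j(a,s)\,\zeta^{2j}\bigr) = \zeta^{-j(M+2)} \sum_{m_1 + m_2 = M} c_{m_1}(s)\, c_{m_2}(s),
\]
and for $M \in \{k-2,\,2k-2,\,3k-2\}$ the integer $M+2$ is a multiple of $k$, so $\zeta^{-j(M+2)} = 1$. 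This transparently accounts for the $j$-independence claimed in the lemma.

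Finally, I would evaluate the three convolution sums by splitting according to which branch of $c_m$ applies. For $M = k-2$ both indices fall in $[0, k-2]$, and each summand factors as $c_{m_1} c_{m_2} = s^{(k-1-m_1)+(k-1-m_2)} h_{m_1}(s) h_{m_2}(s) = s^k h_{m_1}(s) h_{k-2-m_1}(s)$, yielding the stated expression. The case $M = 3k-2$ is symmetric: both indices lie in $[k, 2k-2]$, and substituting $m_i = k + l_i$ reduces each summand to $h_{l_1}(s) h_{k-2-l_1}(s)$. The case $M = 2k-2$ is the only one mixing the two branches: the diagonal term $m_1 = m_2 = k-1$ contributes $h_{k-1}(s)^2$, while the off-diagonal pairs contribute $2\sum_{l=0}^{k-2} s^{k-1-l} h_l(s)^2$. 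The main (indeed, only real) obstacle is bookkeeping --- ensuring the piecewise definition of $c_m(s)$ is handled consistently at the join $m = k-1$ and that the convolution ranges are truncated correctly at the endpoints of $[0,2k-2]$.
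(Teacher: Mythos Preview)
Your proof is correct and follows essentially the same approach as the paper: both arguments factor $F_j$ as the square of the product of two geometric series and then extract the three coefficients by convolution, with your $c_m$ for $m\ge k$, $m=k-1$, $m\le k-2$ corresponding precisely to the paper's pieces $A$, $B$, $C$. Your organization is slightly cleaner in that you isolate the factor $\zeta^{-j(m+2)}$ at the outset, making the $j$-independence for $M\equiv -2 \pmod k$ immediate, whereas the paper tracks the $\theta$-powers through each of the three computations and observes the cancellation at the end.
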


\begin{proof}
In this calculation of the coefficient functions of the $a^{3k-2}, a^{2k-2}$ and $a^{k-2}$ terms appearing in $F_j(a,s)\zeta^{2j}$, we'll often write $\theta := \zeta^j$ to cut down on superscripts.

\begin{align}
F_j(a,s) &= \frac{(1-a^k)^2(a^k-s^k)^2}{f_j(a,s)} = \bigg(\frac{a^k-1}{a-\theta}\bigg)^2 \bigg(\frac{a^k-s^k}{a-s\theta}\bigg)^2 \notag \\
&= \bigg( \sum_{m=1}^k a^{k-m} \theta^{m-1} \bigg)^2\bigg( \sum_{n=1}^k a^{k-n} (s\theta)^{n-1} \bigg)^2 \notag \\
&= \bigg( \sum_{m=1}^k \sum_{n=1}^k a^{2k-m-n} \theta^{m+n-2} s^{n-1} \bigg)^2. \label{E:DoubleSum}
\end{align}

To better understand the double sum {\em inside} the parentheses of (\ref{E:DoubleSum}) above, we split this sum into three pieces, $A, B$ and $C$, depending on the value of $m+n$.  Let $A$ be the sum of the terms with $2 \le m+n \le k$, $B$ be the sum of the terms with $m+n=k+1$, and $C$ be the sum of the terms with $k+2 \le m+n \le 2k$.

We re-write $A$ by letting $l=m+n-2$ be the index of summation.  Then
\begin{align*}
A =\sum_{l=0}^{k-2}a^{2k-l-2} \theta^l h_l(s).
\end{align*}
For $B$, only include those terms with $m+n=k+1$, so we don't have an outside sum.  Therefore,
\begin{equation*}
B = a^{k-1} \theta^{k-1} h_{k-1}(s).
\end{equation*}
For $C$, let $l = m+n-k-2$ be the index of summation.  Then
\begin{align*}
C =\sum_{l=0}^{k-2} a^{k-2-l} \theta^{k+l} s^{l+1}h_{k-2-l}(s).
\end{align*}
So we have

\begin{align*}
(\ref{E:DoubleSum}) 
&=  \bigg(\sum_{l=0}^{k-2} a^{2k-2-l}\theta^l h_l(s) + a^{k-1}\theta^{k-1} h_{k-1}(s) + \sum_{l=0}^{k-2} a^{k-2-l}\theta^{k+l} s^{l+1} h_{k-2-l}(s)\bigg)^2  \notag \\
&= (A+B+C)^2 \notag \\
&= A^2 + B^2 + C^2 +2AB +2BC +2AC.
\end{align*}

I emphasize that as a polynomial in $a$, $A$ has powers of $a$ ranging from $a^{2k-2}$ to $a^k$, $B$ only has an $a^{k-1}$ term, and $C$ has terms ranging from $a^{k-2}$ to $a^0$.  This observation greatly simplifies the computations below.

\textbf{1. Computation of the $a^{3k-2}$ coefficient:}  For the coefficient of the $a^{3k-2}$ term in $F_j(a,s)\theta^2$, it is sufficient to consider the coefficient function of $a^{3k-2}$ in $A^2\theta^2$:
\begin{align*}
A^2 \theta^2 &= \bigg(\sum_{m=0}^{k-2} a^{2k-2-m}\theta^m h_m(s) \bigg) \bigg(\sum_{n=0}^{k-2} a^{2k-2-n}\theta^n h_n(s) \bigg)\theta^2 \\
&= \theta^2 \sum_{m=0}^{k-2} \sum_{n=0}^{k-2} a^{4k-4-m-n}\theta^{m+n} h_m(s)h_n(s).
\end{align*}
Letting $m+n=k-2$, we find the coefficient function of $a^{3k-2}$ is independent of $\theta$ (since $\theta^k = 1$), and therefore independent of $j$.  This function is given by
\begin{equation}
\sum_{l=0}^{k-2} h_l(s)h_{k-2-l}(s).\label{F:CoeffSum1}
\end{equation}

\textbf{2. Computation of the $a^{2k-2}$ coefficient:}  For the coefficient of the $a^{2k-2}$ term in $F_j(a,s)\theta^2$, it is sufficient to consider the coefficient of $a^{2k-2}$ in $(2AC + B^2)\theta^2$:
\begin{align*}
&(2AC + B^2)\theta^2 = \\
&=\bigg[2\big(\sum_{m=0}^{k-2} a^{2k-2-m}\theta^m h_m(s) \big) \big(\sum_{n=0}^{k-2} a^{k-2-n}\theta^{k+n} s^{n+1} h_{k-2-n}(s)\big) + (a^{k-1}\theta^{k-1}h_{k-1}(s))^2 \bigg] \theta^2 \\
&= \bigg[ 2\sum_{m=0}^{k-2} \sum_{n=0}^{k-2} a^{3k-4-m-n}\theta^{k+m+n} s^{n+1} h_m(s)h_{k-2-n}(s) + a^{2k-2}\theta^{2k-2}h_{k-1}(s)^2\bigg] \theta^2.
\end{align*}
Letting $m+n=k-2$, we find the coefficient function of $a^{2k-2}$ is independent of $\theta$ (since $\theta^{2k} = 1$), and therefore independent of $j$.  This function is given by
\begin{equation}
 2\sum_{l=0}^{k-2} s^{k-1-l} h_l(s)^2 + h_{k-1}(s)^2.\label{F:CoeffSum2}
\end{equation}

\textbf{3. Computation of the $a^{k-2}$ coefficient:}  For the coefficient of the $a^{k-2}$ term in $F_j(a,s)\theta^2$, it is sufficient to determine the coefficient of $a^{k-2}$ in $C^2\theta^2$:
\begin{align*}
C^2\theta^2&=\bigg(\sum_{m=0}^{k-2} a^{k-2-m}\theta^{k+m} s^{m+1} h_{k-2-m}(s)\bigg) \bigg(\sum_{n=0}^{k-2} a^{k-2-n}\theta^{k+n} s^{n+1} h_{k-2-n}(s)\bigg)\theta^2\\
&= \theta^2 \sum_{m=0}^{k-2}\sum_{n=0}^{k-2} a^{2k-4-m-n}\theta^{2k+m+n} s^{m+n+2} h_{k-2-m}(s)h_{k-2-n}(s).
\end{align*}
Letting $m+n=k-2$, we find the coefficient function of $a^{k-2}$ is independent of $\theta$ (since $\theta^{3k} = 1$), and therefore independent of $j$.  This function is given by
\begin{equation}
s^k \sum_{l=0}^{k-2} h_l(s) h_{k-2-l}(s).\label{F:CoeffSum3}
\end{equation}
\end{proof}

Now we re-express (\ref{F:CoeffSum1}), (\ref{F:CoeffSum2}) and (\ref{F:CoeffSum3}) as simpler polynomials:

\begin{lemma}\label{L:CoeffComp2} Again, let $h_l(s) = \sum_{r=0}^{l}s^r$.  Then we have the following equalities:
\begin{align}
\sum_{l=0}^{k-2} h_l(s)h_{k-2-l}(s) &= \sum_{l=1}^{k-1}l(k-l)s^{l-1}, \label{E:SumEquality1}\\
2\sum_{l=0}^{k-2} s^{k-1-l} h_l(s)^2 + h_{k-1}(s)^2 &= \sum_{l=1}^{k}(l^2+(k-l)^2s^k)s^{l-1}. \label{E:SumEquality2}
\end{align}
\end{lemma}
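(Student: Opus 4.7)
The plan is to prove both polynomial identities by expanding $h_l(s) = \sum_{r=0}^{l} s^r$ into monomials and reorganizing, so that each coefficient on the left-hand side can be computed in closed form and matched directly against the right-hand side.

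For identity (\ref{E:SumEquality1}), I would interchange the order of summation in
$$\sum_{l=0}^{k-2} h_l(s)\,h_{k-2-l}(s) \;=\; \sum_{l=0}^{k-2}\sum_{i=0}^{l}\sum_{j=0}^{k-2-l} s^{i+j}.$$
For fixed $i, j \ge 0$ with $i + j \le k-2$, the admissible values of $l$ satisfy $i \le l \le k-2-j$, giving exactly $k - 1 - i - j$ choices. Grouping by the exponent $n = i + j$ (realized by $n+1$ ordered pairs), the sum collapses to $\sum_{n=0}^{k-2}(n+1)(k-1-n) s^n$, and the substitution $l = n + 1$ delivers the right-hand side of (\ref{E:SumEquality1}).

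For identity (\ref{E:SumEquality2}), I would again work coefficient by coefficient, exploiting the symmetric triangular shape of $h_l(s)^2$ whose $s^p$-coefficient is $\min(p,\, 2l - p) + 1$ for $0 \le p \le 2l$. In $2\sum_{l=0}^{k-2} s^{k-1-l} h_l(s)^2$, the monomial $s^n$ arises from the $s^p$-term of $h_l(s)^2$ with $p = n - k + 1 + l$, which is admissible only when $|n - (k-1)| \le l \le k-2$. Splitting into the cases $n \ge k-1$ and $n < k-1$, each contribution reduces to twice an arithmetic progression in $l$ of the form $\sum_{l=l_0}^{k-2}(l + \mathrm{const})$; both cases collapse via triangular numbers to the same closed form $\bigl(k-1-|n-(k-1)|\bigr)\bigl(k-|n-(k-1)|\bigr)$. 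Adding the contribution $\min(n,\, 2k-2-n) + 1$ from $h_{k-1}(s)^2$ then yields $(n+1)^2$ for $0 \le n \le k-1$ and $(2k-1-n)^2$ for $k-1 \le n \le 2k-2$. The right-hand side of (\ref{E:SumEquality2}) breaks up as $\sum_{l=1}^{k} l^2 s^{l-1} + \sum_{l=1}^{k}(k-l)^2 s^{k+l-1}$, and the substitutions $n = l - 1$ and $n = k + l - 1$ show these are the same polynomial.

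The hard part will be the bookkeeping in identity (\ref{E:SumEquality2}): the piecewise-linear nature of the coefficients of $h_l(s)^2$, combined with the shift by $s^{k-1-l}$ inside the sum, forces a case split around $n = k-1$ and careful attention to the lower summation bound $l_0 = |n-(k-1)|$. Nevertheless, once this bookkeeping is done, the inner sums are elementary arithmetic progressions whose evaluation produces perfect squares, at which point the match with $l^2$ and $(k-l)^2 s^k$ on the right-hand side is immediate.
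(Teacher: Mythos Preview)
Your proposal is correct and follows essentially the same approach as the paper: compute the coefficient of each power $s^n$ on the left-hand side and match it to the right. Your interchange-of-summation argument for (\ref{E:SumEquality1}) and your use of the closed form $\min(p,2l-p)+1$ for the coefficients of $h_l(s)^2$ in (\ref{E:SumEquality2}) are slightly more streamlined than the paper's decomposition via $L=\min\{l,k-2-l\}$ and its triangular-array picture, but the underlying idea is identical.
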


\begin{proof}
Focus on (\ref{E:SumEquality1}) first.  Notice that 
\begin{align*}
h_l(s)h_{k-2-l}(s) &= \bigg(\sum_{m=0}^l s^m\bigg)\bigg(\sum_{n=0}^{k-2-l} s^n\bigg)\\
&= \sum_{r=0}^{k-2}s^r + \sum_{r=1}^{k-3}s^r + \cdots +\sum_{r=L}^{k-2-L}s^r,
\end{align*}
where $L=\min\{l,k-2-l\}$.  Using this, we see
\begin{align}
(\ref{F:CoeffSum1}) &= \sum_{l=0}^{k-2} h_l(s)h_{k-2-l}(s) \notag \\ &= \sum_{l=0}^{k-2} \bigg( \sum_{r=0}^{k-2}s^r + \sum_{r=1}^{k-3}s^r + \cdots +\sum_{r=L}^{k-2-L}s^r \bigg) \notag \\
&= (k-1)\sum_{r=0}^{k-2}s^r + (k-3)\sum_{r=1}^{k-3}s^r + \cdots + (k-2K-1)\sum_{r=K}^{k-2-K}s^r, \notag\label{E:SumDecomp1}
\end{align}
where $K = \big\lfloor \frac{k-2}{2}\big\rfloor$.  From here, we compute that the coefficient of $s^l$ in (\ref{F:CoeffSum1}) is given by
\begin{align*}
\sum_{m=0}^{L}(k-2m-1) &= (L+1)(k-1) - 2\sum_{m=0}^L m \\&= (L+1)(k-L-1) \\&= (l+1)(k-l-1).
\end{align*}

Therefore,
\begin{align*}
(\ref{F:CoeffSum1}) &= \sum_{l=0}^{k-2} h_l(s)h_{k-2-l}(s)\\
&=\sum_{l=0}^{k-2} (l+1)(k-l-1) s^l = \sum_{l=1}^{k-1} l(k-l)s^{l-1},
\end{align*}
where we've re-indexed the sum in the last equality, obtaining the form of (\ref{E:SumEquality1}).

Now we'll establish (\ref{E:SumEquality2}).  Note that 
\begin{align*}
h_r(s)^2 &= 1 + 2s + \cdots + rs^{r-1} + (r+1)s^r + rs^{r+1} + \cdots + 2s^{2r-1} + s^{2r}.
\end{align*}
Using this, write the pieces of $(\ref{F:CoeffSum2}) = 2\sum_{l=0}^{k-2} s^{k-1-l} h_l(s)^2 + h_{k-1}(s)^2$ in the following way:

\begin{equation*}
\begin{matrix}
s^{k-1}h_0(s)^2 &=   & & & & s^{k-1} & & & & \\
s^{k-2}h_1(s)^2 &=   & & &s^{k-2} & +2s^{k-1} &+s^k& & & \\
s^{k-3}h_2(s)^2 &=   & &s^{k-3} &+2s^{k-2} & +3s^{k-1} &+2s^k&+s^{k+1} & & \\
\vdots & & & & \vdots&\vdots & \vdots & & &\\
h_{k-1}(s)^2 &=  &1 &+\cdots  &+(k-1)s^{k-2} & +ks^{k-1} &+(k-1)s^k& +\cdots &+s^{2k-2} 
\end{matrix}
\end{equation*}

The coefficient of $s^l$ in (\ref{F:CoeffSum2}) can be obtained by considering the vertical columns above.  Notice the coefficient of $s^l$ and $s^{2k-2-l}$ are always the same.  When $0 \le l \le k-1$, we have that the coefficient of $s^l$ is given by 
\begin{equation*}
2 \sum_{r=1}^{l}r + (l+1) =  (l+1)^2.
\end{equation*}

Therefore,
\begin{align*}
(\ref{F:CoeffSum2}) &= \sum_{l=0}^{k-1} (l+1)^2 s^l + \sum_{l=0}^{k-2} (k-(l+1))^2 s^{k+l}\\
&= \sum_{l=0}^{k-1} \big((l+1)^2 + (k-l-1)^2s^k \big)s^{l}\\
&= \sum_{l=1}^{k} \big(l^2 + (k-l)^2s^k \big)s^{l-1},
\end{align*}
where we have re-indexed the sum in the last equality to obtain the form of (\ref{E:SumEquality2}).
\end{proof}

\subsection{Proof of Lemma \ref{L:PolynomialComps}}
\begin{proof}
Putting Lemma \ref{L:CoeffComp1} together with Lemma \ref{L:CoeffComp2} gives us (\ref{E:PolyComp1}), (\ref{E:PolyComp2}) and (\ref{E:PolyComp3}).
\end{proof}

\bibliographystyle{acm}
\bibliography{Edh}

\begin{thebibliography}{10}

\bibitem{Bell82c}
{\sc Bell, S.~R.}
\newblock The {B}ergman kernel function and proper holomorphic mappings.
\newblock {\em Trans. Amer. Math. Soc. 270}, 2 (1982), 685--691.

\bibitem{Bell_CTPTCM}
{\sc Bell, S.~R.}
\newblock {\em The {C}auchy transform, potential theory, and conformal
  mapping}.
\newblock Studies in Advanced Mathematics. CRC Press, Boca Raton, FL, 1992.

\bibitem{Boas96}
{\sc Boas, H.~P.}
\newblock The {L}u {Q}i-{K}eng conjecture fails generically.
\newblock {\em Proc. Amer. Math. Soc. 124}, 7 (1996), 2021--2027.

\bibitem{Boas00}
{\sc Boas, H.~P.}
\newblock Lu {Q}i-{K}eng's problem.
\newblock {\em J. Korean Math. Soc. 37}, 2 (2000), 253--267.
\newblock Several complex variables (Seoul, 1998).

\bibitem{BFS99}
{\sc Boas, H.~P., Fu, S., and Straube, E.~J.}
\newblock The {B}ergman kernel function: explicit formulas and zeroes.
\newblock {\em Proc. Amer. Math. Soc. 127}, 3 (1999), 805--811.

\bibitem{Brem55}
{\sc Bremermann, H.~J.}
\newblock Holomorphic continuation of the kernel function and the {B}ergman
  metric in several complex variables.
\newblock In {\em Lectures on functions of a complex variable}. The University
  of Michigan Press, Ann Arbor, 1955, pp.~349--383.

\bibitem{Catlin89}
{\sc Catlin, D.~W.}
\newblock Estimates of invariant metrics on pseudoconvex domains of dimension
  two.
\newblock {\em Math. Z. 200}, 3 (1989), 429--466.

\bibitem{ChakShaw13}
{\sc Chakrabarti, D., and Shaw, M.-C.}
\newblock Sobolev regularity of the {$\overline{\partial}$}-equation on the
  {H}artogs triangle.
\newblock {\em Math. Ann. 356}, 1 (2013), 241--258.

\bibitem{ChaZey14}
{\sc Chakrabarti, D., and Zeytuncu, Y.}
\newblock ${L}^p$ mapping properties of the {B}ergman projection on the
  {H}artogs triangle.
\newblock {\em (preprint)\/} (2014).

\bibitem{Chen14}
{\sc Chen, L.}
\newblock The ${L}^p$ boundedness of the {B}ergman projection for a class of
  bounded {H}artogs domains.
\newblock {\em (preprint)\/} (2014).

\bibitem{DAngelo78}
{\sc D'Angelo, J.~P.}
\newblock A note on the {B}ergman kernel.
\newblock {\em Duke Math. J. 45}, 2 (1978), 259--265.

\bibitem{DAngelo94}
{\sc D'Angelo, J.~P.}
\newblock An explicit computation of the {B}ergman kernel function.
\newblock {\em J. Geom. Anal. 4}, 1 (1994), 23--34.

\bibitem{EdhMcN15}
{\sc Edholm, L., and McNeal, J.}
\newblock The {B}ergman projection on fat {H}artogs triangles: ${L}^p$
  boundedness.
\newblock {\em (to appear) Proc. Amer. Math. Soc.\/} (2015).

\bibitem{Fefferman74}
{\sc Fefferman, C.}
\newblock The {B}ergman kernel and biholomorphic mappings of pseudoconvex
  domains.
\newblock {\em Invent. Math. 26\/} (1974), 1--65.

\bibitem{Fu14}
{\sc Fu, S.}
\newblock Estimates of invariant metrics on pseudoconvex domains near
  boundaries with constant {L}evi ranks.
\newblock {\em J. Geom. Anal. 24}, 1 (2014), 32--46.

\bibitem{HormanderActa65}
{\sc H{\"o}rmander, L.}
\newblock {$L^{2}$} estimates and existence theorems for the {$\bar \partial
  $}\ operator.
\newblock {\em Acta Math. 113\/} (1965), 89--152.

\bibitem{Krantz_scv_book}
{\sc Krantz, S.~G.}
\newblock {\em Function Theory of Several Complex Variables, 2nd Ed.}
\newblock Wadsworth \& Brooks/Cole Mathematics Series. Wadsworth \&
  Brooks/Cole, 1992.

\bibitem{LQK66}
{\sc Lu, Q.-k.}
\newblock On {K}aehler manifolds with constant curvature.
\newblock {\em Chinese Math.--Acta 8\/} (1966), 283--298.

\bibitem{McNeal89}
{\sc McNeal, J.~D.}
\newblock Boundary behavior of the {B}ergman kernel function in {${\bf C}^2$}.
\newblock {\em Duke Math. J. 58}, no. 2 (1989), 499--512.

\bibitem{McNeal94}
{\sc McNeal, J.~D.}
\newblock Estimates on the {B}ergman kernels of convex domains.
\newblock {\em Adv. Math. 109}, 1 (1994), 108--139.

\bibitem{NagRosSteWai89}
{\sc Nagel, A., Rosay, J.-P., Stein, E.~M., and Wainger, S.}
\newblock Estimates for the {B}ergman and {S}zeg{\H o} kernels in {${\bf
  C}^2$}.
\newblock {\em Ann. of Math. (2) 129}, 1 (1989), 113--149.

\bibitem{Park08}
{\sc Park, J.-D.}
\newblock New formulas of the {B}ergman kernels for complex ellipsoids in
  {$\Bbb C^2$}.
\newblock {\em Proc. Amer. Math. Soc. 136}, 12 (2008), 4211--4221.

\bibitem{Ramadanov67}
{\sc Ramadanov, I.}
\newblock Sur une propri\'et\'e de la fonction de {B}ergman.
\newblock {\em C. R. Acad. Bulgare Sci. 20\/} (1967), 759--762.

\bibitem{Rosenthal69}
{\sc Rosenthal, P.}
\newblock On the zeros of the {B}ergman function in doubly-connected domains.
\newblock {\em Proc. Amer. Math. Soc. 21\/} (1969), 33--35.

\bibitem{Skwar69}
{\sc Skwarczy{\'n}ski, M.}
\newblock The invariant distance in the theory of pseudoconformal
  transformations and the {L}u {Q}i-keng conjecture.
\newblock {\em Proc. Amer. Math. Soc. 22\/} (1969), 305--310.

\bibitem{Zapa16}
{\sc Zapalowski, P.}
\newblock Proper holomorphic mappings between generalized {H}artogs triangles.
\newblock {\em (preprint) arXiv:1601.01806\/} (2016).

\end{thebibliography}

\end{document}